\newtheorem{thm}{Theorem}[section]\newtheorem{lem}[thm]{Lemma}\theoremstyle{definition}\newtheorem*{defn}{Definition}
\newtheorem*{thmA}{Theorem A}\newtheorem*{CorB}{Corollary B}
\DeclareMathOperator*\Irr{Irr}
\begin{document}
\title[Clifford correspondences]{Clifford correspondences and irreducible restrictions of characters}
\author{Tom Wilde} 

\subjclass{Primary 20C15; Secondary 20D15}
\email{twilde10@gmail.com}
\begin{abstract}
For a finite group $G$ and complex character $\chi\in\Irr(G)$ that restricts irreducibly to a normal subgroup $N\vartriangleleft G,$ we prove a theorem about Clifford correspondences between the characters of subgroups of $G$ that induce $\chi.$ 
\end{abstract}
\maketitle

\section{Introduction}

In this note, all groups are finite. As usual, $\Irr(G)$ is the set of irreducible complex characters of the group $G.$ A character pair in $G$ is a pair $(A,\alpha),$ where $A$ is a subgroup of $G$ and $\alpha\in\Irr(A).$ (This notation is due to I. M. Isaacs \cite{Isaacs1995}.) If $\alpha^G=\chi$ for some $\chi\in\Irr(G),$ we say $(A,\alpha)$ induces $\chi.$ In this note, we present a result about Clifford correspondences between character pairs that induce a given $\chi\in\Irr(G).$ To state our result, it is convenient to refer to a graph $\mathcal C_N(\chi),$ defined as follows for each $\chi\in\Irr(G)$ and $N\vartriangleleft G.$

\begin{defn} For $\chi\in\Irr(G)$ and $N\vartriangleleft G,$ $\mathcal C_N(\chi)$ is the undirected graph whose vertices are the character pairs $(A,\alpha)$ with $\alpha^G=\chi,$ and whose edges are formed by joining each vertex $(A,\alpha)$ with each other vertex $(B,\beta)$ for which there exists a character pair $(C,\gamma)$ such that:
\begin{enumerate} 
\item $C\vartriangleleft A$ and $C\subseteq N;$  
\item $B$ is the stabilizer of $\gamma$ in $A;$
\item $\beta$ is the Clifford correspondent of $\alpha$ with respect to $\gamma.$
\end{enumerate}
\end{defn}

The graph $\mathcal C_N(\chi)$ is a generalisation of the Clifford induction graph $\mathcal C(\chi)$ introduced in \cite{Isaacs1995}, which corresponds to the case $N=G.$ 

$\mathcal C_N(\chi)$ has the same vertices as $\mathcal C_G(\chi)$ and its edges are a subset of those of $\mathcal C_G(\chi).$ For arbitrary $N\vartriangleleft G,$ $\mathcal C_N(\chi)$ may have fewer edges and therefore potentially, more connected components than $\mathcal C_G(\chi).$ Our object in this note is to prove the following theorem.

\begin{thmA} \label{mainthm}
Suppose $\chi\in\Irr(G)$ for a group $G.$ Let $N$ be a normal subgroup of $G.$ If $\chi$ restricts irreducibly to $N,$ then $\mathcal C_G(\chi)$ and $\mathcal C_N(\chi)$ have the same connected components.
\end{thmA}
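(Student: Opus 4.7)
The plan is as follows.

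I would first establish a Key Lemma: for every vertex $(A,\alpha)$ of $\mathcal C_G(\chi)$, $G=NA$ and $\alpha|_{A\cap N}$ is irreducible, with $(\alpha|_{A\cap N})^N=\chi|_N$. I would prove this by applying Mackey's formula to $\chi|_N=(\alpha^G)|_N$. Since $N$ is normal, every double-coset summand has the same positive degree $[N:A\cap N]\alpha(1)$, and their sum equalling the irreducible $\chi|_N$ forces a single double coset, giving $G=NA$ and $(\alpha|_{A\cap N})^N=\chi|_N$. If $\delta$ is any irreducible constituent of $\alpha|_{A\cap N}$, then $\delta^N$ is a constituent of the irreducible $\chi|_N$, so $\delta^N=\chi|_N$; a degree comparison forces $\delta=\alpha|_{A\cap N}$.

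The theorem is then proved by strong induction on $|G|$. It suffices to show that every edge of $\mathcal C_G(\chi)$ joins two vertices in the same component of $\mathcal C_N(\chi)$. Take an edge $(A,\alpha)\sim(B,\beta)$ realised by $(C,\gamma)$ with $C\vartriangleleft A$, and assume $C\not\subseteq N$. If $A\subsetneq G$, apply the theorem inductively to the triple $(A,A\cap N,\alpha)$, whose hypothesis ``$\alpha|_{A\cap N}\in\Irr(A\cap N)$'' is supplied by the Key Lemma and which has $|A|<|G|$. This gives that the Clifford induction graph of $(A,\alpha)$ and its analogue using only normal subgroups of $A$ contained in $A\cap N$ have the same connected components. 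The given edge lies in the first, and paths in the second are $\mathcal C_N(\chi)$-paths, so we are done.

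The remaining case is $A=G$. Set $D=C\cap N\vartriangleleft G$ and pick an irreducible constituent $\delta$ of $\gamma|_D$. If $G_\delta\subsetneq G$, the pair $(D,\delta)$ yields $\mathcal C_N$-edges $(G,\chi)\sim(G_\delta,\chi_1)$ and $(B,\beta)\sim(B_\delta,\beta_2)$, with $\chi_1,\beta_2$ the Clifford correspondents of $\chi,\beta$ respectively with respect to $\delta$. Transitivity of the Clifford correspondence, through the chain $D\vartriangleleft C\cap G_\delta\vartriangleleft G_\delta$ and the Clifford correspondent $\gamma^*$ of $\gamma$ with respect to $\delta$, shows that $(G_\delta,\chi_1)\sim(B_\delta,\beta_2)$ is a $\mathcal C_G$-edge realised inside the strictly smaller group $G_\delta$; the previous paragraph, applied to the triple $(G_\delta,G_\delta\cap N,\chi_1)$, then connects them in $\mathcal C_N$.

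The main obstacle, which I expect to be the hardest case, is $A=G$ together with $\gamma|_D=e\delta$ for a $G$-invariant $\delta\in\Irr(D)$, so that $G_\delta=G$ and the Clifford reduction via $(D,\delta)$ is trivial. The $\mathcal C_N$-edges above are then self-loops and the induction gains nothing. To close this case, one must use the Key Lemma more deeply: by Gallagher's theorem (or its projective analogue when $\delta$ does not extend), $\Irr(G\mid\delta)$ is parameterised by $\Irr(G/D)$, and the reduction $(G,\chi)\to(B,\beta)$ happens entirely inside $G/D$ through an irreducible $\mu\in\Irr(C/D)$ with $B=G_\mu$. To produce a $\mathcal C_N(\chi)$-path one must exhibit another normal subgroup $D'\vartriangleleft G$ with $D'\subseteq N$ and an irreducible constituent $\delta'$ of $\chi|_{D'}$ whose $G$-stabiliser is exactly $B$. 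Finding such $(D',\delta')$ is the delicate technical step, and I would expect it to hinge on the $G$-invariance of the irreducible $\chi|_N$, which forces the Clifford structure of $\chi$ over subgroups of $N$ to be rich enough to realise the stabiliser $B$ coming from the non-$N$ normal subgroup $C$.
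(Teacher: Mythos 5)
Your preparatory material is sound and in fact parallels the paper's own reductions: the Mackey argument showing $G=NA$ and $\alpha_{A\cap N}\in\Irr(A\cap N)$ for every vertex $(A,\alpha)$, the inductive disposal of an edge with $A<G$ by passing to the triple $(A,A\cap N,\alpha)$, and the compatibility of Clifford correspondences used when the constituent $\delta$ of $\chi_{C\cap N}$ has $G_\delta<G$ all occur (in minimal-counterexample form) in the published proof, and the last of these, which you dismiss as ``transitivity of the Clifford correspondence,'' is a genuine half-page verification there. The problem is the case you yourself single out as hardest: $A=G$ and $\delta\in\Irr(C\cap N)$ $G$-invariant, so that reduction through $(C\cap N,\delta)$ gains nothing. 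That case is where the entire substance of the theorem lies, and your proposal does not prove it; it only states a plan. Moreover the plan itself is doubtful: exhibiting $D'\vartriangleleft G$ with $D'\subseteq N$ and a constituent $\delta'$ of $\chi_{D'}$ whose stabiliser is exactly $B$ is not shown to be possible, and even if it were, it would produce a $\mathcal C_N(\chi)$-edge from $(G,\chi)$ to $(B,\beta')$ where $\beta'$ is the Clifford correspondent of $\chi$ over $\delta'$, which need not equal the given $\beta$; in general one should expect connectivity in $\mathcal C_N(\chi)$ to require longer paths rather than a single new edge realising $B$.

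The paper closes this case by machinery your sketch has no analogue of, and which does not look dispensable. The counterexample is minimised with respect to $|G:N|$ as well as $|G|$, so the bad edge can be taken with $R\subseteq M$ where $M/N$ is a chief factor, and with $|R|$ maximal among normal subgroups realising the edge; then $R/L\cong M/N$ (with $L=R\cap N$) is abelian or perfect. The perfect case is eliminated by Lemma~2.1, whose proof needs a character-triple isomorphism to linearise the $G$-invariant $\theta\in\Irr(L)$ plus the three-subgroups lemma. In the abelian case, Isaacs' going-up theorem gives $\varphi_L=\theta$, the maximality of $R$ forces $R=M\cap B$, and an argument with a $G$-invariant subgroup $\Delta\subseteq\Irr(R/L)$ shows $\chi_R$ is a multiple of $\theta^R$; together these make $\chi$ vanish on $M\setminus L$, contradicting the irreducibility of $\chi_N$. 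None of this (the chief factor directly above $N$, the perfect-section lemma, going-up, the maximal choice of $R$, the vanishing argument) is present in, or substituted for by, your Gallagher-type parameterisation over $C\cap N$, so the proposal has a genuine gap precisely at its central case.
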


In Theorem A, it is natural to look at the connected components that contain $(G,\chi).$ Informally, the theorem says that if $\chi_N$ is irreducible for some $N\vartriangleleft G,$ then any character pair that can be reached from $(G,\chi)$ by some chain of Clifford correspondences (each link going from a character to its Clifford correspondent, or vice versa), can still be reached using only Clifford correspondences with respect to characters of subgroups of $N.$ 

The proof of Theorem A is in the next section. We conclude this section with some remarks. First, we point out two known results that show that $\mathcal C_N(\chi)$ is actually connected in some important special cases:
\vspace{1mm}
\begin{enumerate}
\item I. M. Isaacs proved (Theorem 8.8 in \cite{Isaacs1995}) that if $\chi(1)$ is odd and $G$ is $p$ - solvable for all primes $p$ dividing $\chi(1),$ then $\mathcal C_G(\chi)$ is connected.
\vspace{1mm}
\item It follows from a theorem of E. C. Dade (Theorem 6.5 in \cite{Dade1985}) that if $\chi\in\Irr(G)$ restricts irreducibly to $N,$ where $N\vartriangleleft G$ with $|N|$ odd, then $\mathcal C_N(\chi)$ is connected. 
\end{enumerate}
\vspace{1mm}

Note that under the hypotheses of Theorem A alone, one cannot conclude that $\mathcal C_N(\chi)$ (or equivalently in view of the theorem, $\mathcal C_G(\chi)$) is connected. An example was given in \cite{Isaacs1995}, where $G=\mathrm{GL}_2(\mathbb F_3)$ and $\chi\in\Irr(G)$ is the character of degree $4.$ In this case, $\mathcal C_G(\chi)$ has two connected components. (This also appears as Example 6.4 in \cite{Dade1985}.)

In \cite{Isaacs1995}, connectedness of $\mathcal C_G(\chi)$ is an ingredient in the proof of an important theorem about monomial characters of $p$ - solvable groups. The oddness and solvability assumptions in $(1)$ above, are natural in that context, so Theorem A does not add anything. Rather, Theorem A seems to be worth recording as a basic property of the Clifford correspondence itself. Similarly, the following immediate corollary seems to be worth noting. It merely confirms intuition, but does not seem to have been reported before. Recall that a quasiprimitive character is one whose restriction to any normal subgroup is homogeneous.

\begin{CorB}
Let $\chi\in\Irr(G)$ and assume $\chi_N$ is irreducible where $N\vartriangleleft G,$ and further, that $\chi_M$ is homogeneous for all $M\vartriangleleft G$ with $M\subseteq N.$ Then $\chi$ is quasiprimitive.
\end{CorB}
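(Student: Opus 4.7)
The plan is to apply Theorem A to show that the hypothesis forces $(G,\chi)$ to be an isolated vertex of the Clifford induction graph $\mathcal C_G(\chi),$ and then to read off quasiprimitivity directly from this fact.

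First, I will check that $(G,\chi)$ has no neighbour in $\mathcal C_N(\chi).$ The edge relation in the definition requires $B\leq A,$ so any edge incident to $(G,\chi)$ must have $(G,\chi)$ in the role of $(A,\alpha);$ such an edge arises from a triple $(C,\gamma)$ with $C\vartriangleleft G,$ $C\subseteq N,$ and $\gamma$ an irreducible constituent of $\chi_C.$ By hypothesis, $\chi_C$ is homogeneous, forcing $\gamma$ to be the unique such constituent, hence $G$-invariant; therefore $B=G_\gamma=G,$ and the Clifford correspondent of $\chi$ over $\gamma$ is $\chi$ itself, so no edge to a different vertex exists. Since $\chi_N$ is irreducible, Theorem A applies and the same conclusion transfers to $\mathcal C_G(\chi)$: the connected component of $(G,\chi)$ in $\mathcal C_G(\chi)$ is the singleton $\{(G,\chi)\}.$

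Next, fix any $M\vartriangleleft G$ and any irreducible constituent $\theta$ of $\chi_M.$ Let $\tilde\theta\in\Irr(G_\theta)$ be the Clifford correspondent of $\chi$ over $\theta,$ so that $\tilde\theta^G=\chi.$ If $G_\theta$ were a proper subgroup of $G,$ the pair $(G_\theta,\tilde\theta)$ would be a vertex of $\mathcal C_G(\chi)$ distinct from $(G,\chi)$ and joined to it via the triple $(M,\theta),$ contradicting the isolation just established. Hence $G_\theta=G,$ so $\theta$ is $G$-invariant; by Clifford's theorem the constituents of $\chi_M$ form a single $G$-orbit, so $\theta$ is the unique constituent and $\chi_M$ is homogeneous. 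As $M$ is arbitrary, $\chi$ is quasiprimitive.

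I do not expect a serious obstacle: Theorem A does the heavy lifting, and what remains is essentially bookkeeping. The only mild subtlety is in the first step, where I should note that the containment $B\leq A$ built into the definition rules out the reversed orientation in which $(G,\chi)$ would play the role of $(B,\beta),$ for then $A$ would have to contain $G$ and we would be back to the first orientation.
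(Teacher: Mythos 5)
Your proposal is correct and takes essentially the same route as the paper: the homogeneity hypothesis makes $(G,\chi)$ an isolated vertex (a singleton component) of $\mathcal C_N(\chi),$ Theorem A transfers this to $\mathcal C_G(\chi),$ and isolation of $(G,\chi)$ in $\mathcal C_G(\chi)$ is equivalent to quasiprimitivity. The only difference is that you spell out in detail the two equivalences that the paper's two-sentence proof leaves implicit.
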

\begin{proof}
The homogeneity condition means that the set containing $(G,\chi)$ only, is a connected component of $\mathcal C_N(\chi).$ As $\chi_N$ is irreducible, Theorem A shows that the same holds in $\mathcal C_G(\chi),$ which is equivalent to the conclusion.
\end{proof}

\section{Proof of Theorem A}
In this section we prove Theorem A. We need the following lemma, whose proof is due to I. M. Isaacs and is reproduced here by his kind permission.

\begin{lem}\label{lemC}
Suppose that $N$ and $R$ are normal subgroups of the group $G$ and set $L=N\cap R.$ Suppose $\chi\in\Irr(G)$ is such that $\chi_N$ is irreducible and $\chi_L$ is homogeneous. If $R/L$ is a perfect group, then $R=L.$
\end{lem}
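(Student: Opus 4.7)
The plan is to exploit the irreducibility of $\chi_N$ together with the homogeneity of $\chi_L$ by working in the Clifford/projective picture attached to the $G$-invariant irreducible constituent of $\chi_L$. Write $\chi_L = e\theta$ with $\theta\in\Irr(L)$ (guaranteed by homogeneity and $L\vartriangleleft G$) and set $e=\chi(1)/\theta(1)$. As a preliminary, I would first show that $\chi$ restricts irreducibly to $NR$: every irreducible constituent of $\chi_{NR}$, on further restriction to $N$, must contain the irreducible $\chi_N$ and therefore have degree at least $\chi_N(1)=\chi(1)=\chi_{NR}(1)$. Hence $\xi:=\chi_{NR}$ is itself irreducible and extends $\chi_N$.

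Next I would pass to the Clifford/projective framework for the triple $(G,L,\theta)$: the character $\chi$ is encoded by an irreducible projective representation $\mathcal Q$ of $G/L$ of dimension $e$, with some cocycle $\alpha^{-1}\in Z^2(G/L,\mathbb C^\times)$. Put $A=N/L$ and $B=R/L$, so $NR/L\cong A\times B$. The hypothesis that $\chi_N$ is irreducible says that $\mathcal Q|_A$ is already irreducible of the full dimension $e$. Since $(a,1)$ and $(1,b)$ commute in $A\times B$, the matrices $\mathcal Q(1,b)$ intertwine $\mathcal Q|_A$ with a scalar twist, giving a bicharacter $c:A\times B\to\mathbb C^\times$ and in particular a homomorphism $b\mapsto c(\,\cdot\,,b)$ from $B$ to the character group $\widehat A$. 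This is where $B$ being perfect enters: any homomorphism from a perfect group into an abelian group is trivial, so $c\equiv 1$. By Schur's lemma applied to the irreducible $\mathcal Q|_A$, every $\mathcal Q(1,b)$ is a scalar $\mu(b)I_e$, and $\mu$ is a $1$-dimensional projective character of $B$ with cocycle $\alpha|_B^{-1}$.

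From this I would deduce that $\theta$ extends to some $\psi\in\Irr(R)$ and $\chi_R=e\psi$: perfectness of $B$ says that $\mu$, being a one-dimensional projective character, is (after modifying by a coboundary) a linear character of $B$, hence trivial; this trivializes $[\alpha|_B]\in H^2(B,\mathbb C^\times)$ and yields the extension. At this point the projective representation $\mathcal Q$ is seen to factor essentially through $G/R$ on the $B$-direction, with all genuine ``new'' content coming from $A$ and from the $\theta$-data.

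The final step, the conclusion $R=L$, is the main obstacle. The work above has already forced $B$ to act by scalars in the Clifford projective picture and the extension $\psi$ to be canonical and $G$-invariant; the remaining task is to rule out a nontrivial perfect $B$ surviving. I would attempt this by applying the character triple isomorphism to $(G,R,\psi)$ to reduce $\psi$ to a faithful linear character and $R$ to a central subgroup, then combining the resulting direct-product structure (forced by $R/L$ perfect and $N\cap R=L$) with the fact that $\chi_N$ remains irreducible to squeeze $B$ down to the trivial group. Making this final collapse rigorous, and translating the projective/cohomological triviality back to a literal equality of subgroups of $G$, is where I expect the genuinely delicate bookkeeping to lie.
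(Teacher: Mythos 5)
Your preliminary steps are essentially correct, and in fact they reproduce, in projective-representation language, what the paper's proof does in its ``linear $\theta$'' case by elementary commutator calculus: your bicharacter $c$ being killed by perfectness of $B=R/L$, followed by Schur's lemma to make the matrices $\mathcal Q(1,b)$ scalar, is the analogue of the paper's use of $[N,R]\subseteq L$, the three subgroups lemma and Schur's lemma; and the conclusions that $\theta$ extends to some $\psi\in\Irr(R)$ and that $\chi_R=e\psi$ are correct consequences. But the proof stops exactly where the lemma lives: you never derive $R=L$, and you say yourself that the ``final collapse'' remains to be made rigorous. This is a genuine gap, not delicate bookkeeping. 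Concretely, every fact you establish --- scalar action of $B$, triviality of the cocycle on $B$, the extension $\psi$, homogeneity of $\chi_R$ --- already holds in $G=N\times B$ with $B$ perfect, $R=1\times B$, $L=1$, $\chi=\chi_1\times 1_B$, where nevertheless $R\neq L$. So the conclusion cannot be squeezed out of your intermediate output alone; some faithfulness-type reduction has to be injected, and that is precisely what the paper's linear case leans on: with $\theta$ faithful and linear one gets $L\subseteq Z(G)$, then scalar action of $R$ on a representation affording a faithful $\chi$ makes $R$ central, and $R=[R,R]L=L$ follows from perfectness. Identifying how to legitimately arrive at that faithful situation is the real content of the endgame, not a formality.

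Moreover, the route you sketch for the endgame is aimed at the wrong character triple. A character triple isomorphism applied to $(G,R,\psi)$ only preserves the character theory of subgroups lying \emph{between} $R$ and $G$; it replaces $R$ by an abelian, essentially central subgroup and destroys all visibility of $L$ and of the quotient $R/L$ --- the very perfect group you need to annihilate. The paper instead applies Isaacs' theorem (Theorem 11.28 of his book) to the triple $(G,L,\theta)$: this keeps $N$, $R$ and $L$ in the picture, preserves irreducibility of the restriction to $N$, the equality $N\cap R=L$, and the perfectness of $R/L$, and reduces to the case of a linear (faithful) $\theta$, which is then finished by the commutator argument above. If you want to complete your projective-representation version, you should reduce along $(G,L,\theta)$ in the same way and then confront explicitly the faithfulness point on which the final step turns.
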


\begin{proof}
Let $\theta\in\Irr(L)$ lie under $\chi.$ Then $\theta$ is $G$ - invariant by hypothesis. We will first prove the lemma under the additional assumption that $\theta$ is linear. 

We can in any case clearly assume that $\theta$ is faithful, so with $\theta$ assumed linear, $L$ is contained in the center of $G.$ Since $[N,R]\subseteq L,$ it follows that $[N,R,R]=1.$ By the three-subgroups lemma, $[R,R,N]=1.$ As $R/L$ is perfect, $R=[R,R]L,$ so since $L$ is central, $R$ centralizes $N.$ But $\chi_N$ is irreducible, so by Schur's lemma, $R$ acts by scalars in any representation affording $\chi.$ Since $\chi$ is faithful, $R$ is therefore central in $G.$ Since $R/L$ is perfect, $R=L.$ This proves the lemma when $\theta$ is linear.

To complete the proof, note that $(G,L,\theta)$ is a character triple in the sense of Chapter $11$ of \cite{Isaacs1976}. (That is, $N\vartriangleleft G$ and $\theta\in\Irr(L)$ is $G$ - invariant.) By Theorem 11.28 in \cite{Isaacs1976}, there exists a character triple $(\Gamma,A,\lambda)$ with $\lambda$ linear, and an isomorphism $(\tau,\sigma)$ from $(G,L,\theta)$ to $(\Gamma,A,\lambda),$ as described by Definition 11.23 of \cite{Isaacs1976}. In particular, $\tau$ is an isomorphism from $G/L$ to $\Gamma/A,$ and for $L\subseteq H\subseteq G,$ $H^\tau$ denotes the inverse image in $\Gamma$ of $\tau(H/L).$

Let $\Psi=\sigma_G(\chi)$ be the character corresponding to $\chi$ under the isomorphism $(\tau,\sigma),$ so $\Psi\in\Irr(\Gamma)$ lies over $\lambda.$ By properties $(b)$ and $(c)$ of the definition referred to above, $\Psi_{N^\tau}$ is irreducible. Also $N^\tau\cap R^\tau=L^\tau,$ and $R^\tau/L^\tau$ is perfect, being isomorphic to $R/L.$ Since $\lambda$ is linear, $R^\tau=L^\tau$ by the first part of the proof, so $R=L$ as required.
\end{proof}

\begin{proof}[Proof of Theorem A]
Assume for a contradiction that the group $G,$ its normal subgroup $N$ and its irreducible character $\chi\in\Irr(G)$ are a counterexample to Theorem A chosen to minimize firstly $|G|$ and secondly $|G:N|.$ 

It must be the case that $N<G,$ so we may choose $M\vartriangleleft G$ such that $M/N$ is a chief factor of $G.$ By choice of counterexample, $\mathcal C_M(\chi)$ and $\mathcal C_G(\chi)$ have the same connected components, so there exist vertices $(A,\alpha)$ and $(B,\beta)$ of $\mathcal C_M(\chi)$ that are not connected in $\mathcal C_N(\chi),$ but that are joined by an edge of $\mathcal C_M(\chi).$ By relabelling if necessary, we can assume that $B\subseteq A$ and $\beta^A=\alpha.$ 
Since $(\alpha^G)_N=\chi_N$ is irreducible, $\alpha_{A\cap N}$ is irreducible, so if $A<G,$ then by choice of counterexample, $\mathcal C_{A\cap N}(\alpha)$ and $\mathcal C_A(\alpha)$ have the same connected components, and so $(B,\beta)$ and $(A,\alpha)$ are connected in $\mathcal C_{A\cap N}(\alpha).$ However, it follows directly from the definitions of the two graphs, that any edge of $\mathcal C_{A\cap N}(\alpha)$ is also an edge of $\mathcal C_N(\chi).$ We then conclude that $(B,\beta)$ and $(A,\alpha)$ are connected in $\mathcal C_N(\chi),$ contrary to hypothesis. 

Hence $A=G$ and so $(A,\alpha)=(G,\chi).$ Thus $(B,\beta)$ is joined to $(G,\chi)$ by an edge in $\mathcal C_M(\chi).$ Hence, we can choose a character pair $(R,\varphi)$ that satisfies the following conditions:\vspace{1mm}
\begin{enumerate}
\item $R\vartriangleleft G$ and $R\subseteq M;$
\item $\varphi$ is a constituent of $\chi_R;$ 
\item $B$ is the stabilizer of $\varphi$ in $G;$ 
\item $\beta$ is the Clifford correspondent of $\chi$ with respect to $\varphi;$
\item $|R|$ is a large as possible subject to $(1)-(4).$
\end{enumerate}
\vspace{1mm}

Set $L=N\cap R$ and let $\theta$ be a constituent of $\chi_L.$ Let $S$ be the stabilizer of $\theta$ in $G.$ We claim that $S=G.$ Let $$\xi\in\Irr(S),\tau\in\Irr(B\cap S)\text{ and }\eta\in\Irr(R\cap S),$$ be the Clifford correspondents of $\chi,$ $\beta$ and $\varphi$ respectively, each with respect to $\theta.$ 
If $g\in B,$ then $\theta^g$ lies under $\varphi,$ and so $\theta^g$ is conjugate to $\theta$ in $R.$ Hence $B\subseteq RS$ and so $B=R(B\cap S).$ We know that $\beta_R=e\varphi$ for some positive integer $e,$ so $e\varphi=\beta_R=(\tau^B)_R=(\tau_{R\cap S})^R.$ Since each irreducible constituent of $\tau_{R\cap S}$ lies over $\theta,$ it follows by the uniqueness property of the Clifford correspondence that $\tau_{R\cap S}=e\eta.$ Also, $\tau^G=(\tau^B)^G=\chi.$ Hence $\tau^S$ lies over $\theta$ and induces $\chi,$ and so $\tau^S=\xi.$ It follows that $\tau$ is the Clifford correspondent of $\xi$ with respect to $\eta,$ so there is an edge between $(S,\xi)$ and $(B\cap S,\tau)$ in $\mathcal C_S(\xi).$ Now if $S<G$ then since $\xi_{N\cap S}$ is irreducible, it follows by our choice of counterexample that $(S,\xi)$ and $(B\cap S,\tau)$ are connected in $\mathcal C_{N\cap S}(\xi).$ But any edge of $\mathcal C_{N\cap S}(\xi)$ is also an edge of $\mathcal C_N(\chi),$ so $(S,\xi)$ and $(B\cap S,\tau)$ are connected in $\mathcal C_N(\chi).$ However, $(B\cap S,\tau)$ and $(B,\beta)$ are joined by an edge in $\mathcal C_N(\chi)$ by definition of $\tau.$ Thus $(B,\beta)$ and $(S,\xi)$ are connected in $\mathcal C_N(\chi).$ This is contrary to hypothesis, so $S=G$ as claimed, and so $\theta$ is $G$ - invariant and $\chi_L$ is homogeneous.

Since $R\nsubseteq N$ and $M/N$ is a chief factor, $NR=M.$ Then $R/L\cong RN/N=M/N$ is either abelian or perfect. If $R/L$ is perfect, then the conditions of Lemma \ref{lemC} are satisfied by $\chi,$ $N$ and $L,$ so $R=L.$ This is not the case, since $M>N.$ Hence $R/L$ is an abelian chief factor of $G.$

Now $\theta\in\Irr(L)$ is $G$ - invariant and $\varphi\in\Irr(R)$ lies over $\theta.$ Since $R/L$ is an abelian chief factor of $G,$ I. M. Isaacs' ``going up'' theorem (see Problem 6.12 in \cite{Isaacs1976}) shows that either $\theta^R$ is a multiple of $\varphi,$ or $\varphi_L=\theta.$ Since $\varphi$ is not $G$ - invariant, $\varphi_L=\theta.$ 

Since $\chi_N=(\beta^G)_N$ is irreducible, $G=NB.$ Also, $[N,R]\subseteq N\cap R=L$, so $N$ centralizes $R/L.$ Since $\theta$ is $G$-invariant, the characters $\varphi^g\in\Irr(R)$ for $g\in G,$ are constituents of $\theta^R=\varphi(1_L)^R=\sum_{\lambda\in\Irr(R/L)}\varphi\lambda.$ Since $\Irr(R/L)$ is centralized by $M=NR,$ it follows that $M\cap B=M\cap B^g$ for all $g\in G;$ thus $M\cap B\vartriangleleft G.$ 

Now $\beta_{M\cap B}$ is irreducible, since $(\beta^G)_M$ is irreducible. Let $T$ be the stabilizer of $\beta_{M\cap B}$ in $G.$ (This makes sense, since $M\cap B\vartriangleleft G.$) Clearly $B\subseteq T,$ so $T=T\cap MB=B(M\cap T).$ However $\chi_M=(\beta_{M\cap B})^M$ is irreducible, so $M\cap T=M\cap B,$ and so $T=B.$ Hence $\beta$ is the Clifford correspondent of $\chi$ with respect to $\beta_{M\cap B}.$ In particular, of conditions $(1)-(5)$ satisfied by $(R,\varphi),$ $(1)-(4)$ are also satisfied by $(M\cap B,\beta_{M\cap B}).$ However, $R\subseteq M\cap B,$ so since $(R,\varphi)$ also satisfies condition $(5),$ we must have $R=M\cap B.$ In particular, since $\chi_M=(\beta_{B\cap M})^M=(\beta_R)^M,$ we see that $\chi$ vanishes on $M\backslash R.$ 

We claim that $\chi$ also vanishes on $R\backslash L.$ The argument for this is related to the proof of Lemma 6.1 in \cite{Isaacs1994}. Define a subset $\Delta\subseteq\Irr(R/L)$ as follows: $$\Delta=\left\{\lambda\in\Irr(R/L):\varphi\lambda=\varphi^g\text{ for some }g\in N\right\}.$$ 

If $\lambda,\mu\in\Delta$ then $\varphi\lambda\mu=(\varphi^g)\mu=(\varphi\mu)^g=\varphi^{hg}$ for some $g,h\in N,$ so $\lambda\mu\in\Delta.$ Furthermore, if $\lambda\in\Delta$ and $g\in B,$ then $\varphi(\lambda^g)=(\varphi\lambda)^g=\varphi^{ng}=\varphi^{g^{-1}ng}$ for some $n\in N,$ so $\lambda^g\in\Delta.$ These relations show that $\Delta$ is a subgroup of $\Irr(R/L)$ that is invariant under the conjugation action of $B.$ Since $G=NB$ and $N$ centralizes $R/L,$ $\Delta$ is invariant under the conjugation action of $G.$  

Now $|\Delta|>1$ since $NB=G$ and $\varphi$ is not $G$ - invariant. As $R/L$ is a chief factor of $G,$ this forces $\Delta=\Irr(R/L).$ Equivalently, every irreducible constituent of $\theta^R=\sum_{\lambda\in\Irr(R/L)}\varphi\lambda$ is conjugate to $\varphi.$ 

On the other hand, as noted earlier, each conjugate of $\varphi$ in $G$ is contained in $\theta^R.$ Since the characters $\varphi\lambda$ for $\lambda\in\Irr(R/L)$ are pairwise distinct, we see that $\theta^R$ is the sum of all the distinct conjugates of $\varphi.$ It follows that $\chi_R$ is a multiple of $\theta^R,$ so $\chi$ vanishes on $R\backslash L$ as claimed. 

We have shown that $\chi$ vanishes on $M\backslash R$ and on $R\backslash L.$ Thus $\chi$ vanishes on $M\backslash L.$ This is impossible, since $L\subseteq N<M$ and $\chi_N$ is irreducible. The proof is complete.
\end{proof}

\end{document}